\newtheorem {theorem}{Theorem}[section]
\newtheorem {proposition}[theorem]{Proposition}
\theoremstyle{definition}
\newtheorem* {remark}{Remark}
\DeclareMathOperator{\conv}{conv}
\DeclareMathOperator{\aff}{aff}
\DeclareMathOperator{\relint}{rel\, int}
\title{On Delaunay's classification theorem on faces of parallelohedra of codimension three}
\author{Alexander Magazinov \thanks{Supported by the Russian government project 11.G34.31.0053 and RFBR grant 11-01-00633-a.}}
\begin{document}

\maketitle

\begin{abstract}

In 1929 B.~N.~Delaunay proved that there are exactly 5 types of coincidence of parallelohedra at faces of codimension 3.
We give a combinatorial proof of this theorem and prove several additional statements on three-codimensional faces of
parallelohedral tiling.

\end{abstract}

\section{Introduction}

Let $P$ be a convex polytope in $\mathbb R^d$ such that there exists a face-to-face tiling $T(P)$ of $\mathbb R^d$ by translates of $P$.
Then $P$ is called a {\it parallelohedron}. In the present paper $d$ will always stand for the dimension of $P$.

As one can easily see, from the face-to-face property follows that
$$\Lambda(P) = \{\mathbf t: P+\mathbf t \in T(P) \} \quad \text{is a lattice.}$$

In 1897 H.~Minkowski \cite{min1897} established three necessary conditions for a convex polytope $P$ to be a parallelohedron. The statement of the first two is clear.
\begin{enumerate}
	\item A parallelohedron $P$ is a centrally symmetric polytope.
	\item All hyperfaces of $P$ are centrally symmetric. 
\end{enumerate}
The third condition uses the notion of {\it belts}, defined for convex polytopes with centrally symmetric hyperfaces.

Let $Q$ be a convex $d$-dimensional polytope with all hyperfaces being centrally symmetric. Let $F$ be an arbitrary $(d-2)$-dimensional face of $Q$.
Then $F$ determines a {\it belt} of $Q$, which is the set of all hyperfaces of $Q$ parallel to $F$. One can notice that each hyperface of the belt contains
exactly two $(d-2)$-dimensional faces parallel to $F$ and each $(d-2)$-dimensional face parallel to $F$ is shared by exactly two hyperfaces of the belt.

The third Minkowski's condition is as follows.
\begin{enumerate}
	\item[3.] Every belt of $P$ consists of 4 or of 6 hyperfaces.
\end{enumerate}

Later on, in 1954, B.~A.~Venkov \cite{ven1954} has shown that conitions 1 -- 3 are sufficient for a convex polytope $P$ to be a parallelohedron. 
Thus the conditions 1 -- 3 are commonly called {\it Minkowski--Venkov conditions}. 

Let $F$ be a face of $T(P)$. Define the {\it associated cell} $\mathcal D(F)$ as the set of all centers of parallelohedra that share $F$. This notion has been
used, for example, by A.~Ordine \cite{ord2005}.

The family of all associated cells of the tiling has a structure of a cell complex. In this complex $\dim \mathcal D(F) = d - \dim F$.

Now let $\dim F = d-k$. Consider a $k$-dimensional plane $L$ that intersects $F$ transversally. In a small neighborhood of $F$ the section of $T(P)$ by $L$ coincides with a complete $k$-dimensional polyhedral fan, which is called the {\it fan of a face $F$}. (Compare with the definition of a {\it star of
$F$} by Ryshkov and Rybnikov \cite{rry1997}.)

The combinatorics of the fan of $F$ is completely described by the poset of all dual subcells of $\mathcal D(F)$, where the partial order relationship is the inclusion.

In 1929 B.~N.~Delaunay proved the celebrated theorem, which can be stated in the terms of the present paper as follows.

\begin{theorem}\label{delaunay}

Every associated cell of a $(d-3)$-dimensional face of a parallelohedral tiling is combinatorially equivalent to one of the following five 3-dimensional polytopes:
tetrahedron, octahedron, quadrangular pyramid, triangular prism, parallelepiped. Equivalently, every fan of a $(d-3)$-dimensional face is combinatorially 
equivalent to one of the fans shown in Figure 1.

\end{theorem}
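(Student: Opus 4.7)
My plan is to treat $\mathcal{D}(F)$ as a convex $3$-polytope and to extract from Minkowski's conditions enough combinatorial restrictions to narrow the possibilities down to the five listed types.

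I would first determine the shapes of the $2$-faces of $\mathcal{D}(F)$. A $2$-face is dual to a $(d-2)$-face $G\supset F$ of the tiling and has as many edges as there are parallelohedra sharing $G$. Projecting the tiling orthogonally to $\aff G$ produces a face-to-face tiling of a neighbourhood by translates of a centrally symmetric polygon whose sides count the belt of $G$; by Minkowski's third condition this number is $4$ or $6$. The two-dimensional case of Minkowski--Venkov then forces that either $4$ parallelograms or $3$ centrally symmetric hexagons meet at each vertex of the projected tiling. Hence every $2$-face of $\mathcal{D}(F)$ is a triangle or a lattice parallelogram.

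Next I would bound the degree of each vertex of $\mathcal{D}(F)$ by $4$. A vertex corresponds to a parallelohedron $Q\ni F$, and its degree equals the number of sides of the link polygon of $F$ in $Q$. The key technical step is showing this link is always a triangle or quadrilateral. I expect to prove this by analyzing the three-dimensional transverse slice of $Q$ at an interior point of $F$: its cross-section near $F$ is a convex cone whose two-dimensional walls correspond to hyperfaces of $Q$ through $F$, and the belts of $Q$ at the $(d-2)$-faces through $F$ (each of length $4$ or $6$) constrain the number of walls of this cone to at most four.

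With both constraints in hand, I would enumerate the combinatorial possibilities. Writing $t,q$ for the numbers of triangular and quadrilateral $2$-faces and $v_3,v_4$ for the numbers of vertices of respective degrees, Euler's formula combined with the relations $2E=3t+4q=3v_3+4v_4$ yields $v_3=8-t$ and $v_4=(3t+2q-12)/2$, forcing $t\in\{0,2,4,6,8\}$ and $q\ge\max(0,6-3t/2)$. A short case analysis then produces a finite list of candidate combinatorial types which includes the five Delaunay types but also spurious candidates such as the triangular bipyramid and the square antiprism.

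The main obstacle is the final step: eliminating the spurious candidates. Here I expect to exploit the parallelogram structure of the quadrilateral $2$-faces to equip $\mathcal{D}(F)$ with a partial zone structure from pairs of parallel opposite edges, and to combine this with compatibility requirements forcing the full $3$-dimensional fan at $F$ to be realized by a single lattice. A careful case-by-case inspection of how triangular and quadrilateral $2$-faces can meet around each vertex should then force every non-Delaunay candidate to violate either the zone compatibility or the belt structure at some vertex, leaving only the five polytopes listed.
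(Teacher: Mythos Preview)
Your enumeration step has a genuine gap: the two constraints you extract (every $2$-face a triangle or quadrilateral, every vertex of degree $3$ or $4$) do \emph{not} cut the list down to finitely many combinatorial types. Your relations $v_3=8-t$ and $v_4=(3t+2q-12)/2$ put no upper bound on $q$, and indeed the rhombic dodecahedron ($t=0$, $q=12$, $v_3=8$, $v_4=6$) and the cuboctahedron ($t=8$, $q=6$, $v_3=0$, $v_4=12$) already satisfy all of your hypotheses. Worse, rectification preserves the property ``all faces $\le 4$-gons and all vertex degrees $\le 4$'' while strictly increasing the vertex count, so iterating from the cube gives an infinite family. Thus the ``short case analysis'' you propose cannot terminate, and your final elimination step by ``zone compatibility'' is left doing essentially all the work with no concrete mechanism. (Separately, your argument that the link of $F$ in a single parallelohedron has at most four sides is only a heuristic; the belt condition constrains valences of $(d-2)$-faces in the fan, not directly the number of facets of a single $3$-cone.)

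The paper closes exactly this gap with one extra counting identity coming from Dolbilin's theorem that any two parallelohedra of $T(P)$ meet in a \emph{standard} face. Translated to the fan of $F$, this says that for any two $3$-cones $C_1,C_2$ the cone $C_1\cap C_2$ is standard and $C_1,C_2$ are symmetric about it (the fan is ``jammed''). Dually, every pair of vertices of $\mathcal D(F)$ is either an edge, a diagonal of a quadrilateral $2$-face, or (when the fan is centrally symmetric) an antipodal pair. Counting pairs gives $\binom{c}{2}=b+2a_4$ (respectively $\binom{c}{2}=b+2a_4+c/2$), and together with Euler's relation and the valence relation this forces $c\le 6$ in the asymmetric case and $c\in\{6,8\}$ in the symmetric case, after which the five types drop out with no spurious candidates. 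If you want to rescue your approach, the missing ingredient is precisely this pair-of-vertices constraint; once you add it, your vertex-degree bound becomes unnecessary. Note also that you are assuming from the outset that $\mathcal D(F)$ is an honest convex $3$-polytope whose face lattice matches the subcell structure; the paper proves this separately (its Theorem~5.1), and you should too.
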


\begin{figure}[ht]   
      \centerline{\includegraphics[width = \textwidth]{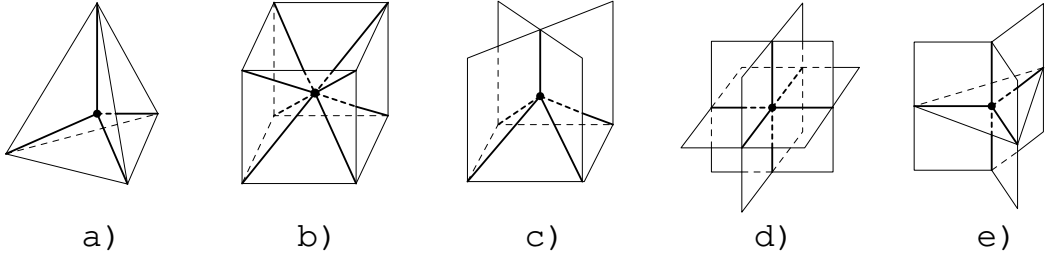}}
      \caption{5 possible fans of $(d-3)$-faces}
\end  {figure}

\begin{remark}
The significance of Theorem \ref{delaunay} becomes clear, as one considers its applications to known partial solutions of Voronoi's Conjecture \cite{vor1908}.
Several important examples are \cite{del29, zhi1929, ord2005}.
\end{remark}

In this paper we give a combinatorial proof of Theorem \ref{delaunay} using the notion of {\it jammed polyhedral fans}. Further, for every 
$(d-3)$-dimensional face $F$ we prove that the combinatorics of $\mathcal D(F)$ as an associated cell coincides with the combinatorics of 
$\conv \mathcal D(F)$. In particular, the proof implies that $\mathcal D(F)$ has affine dimension 3.

\begin{remark}
The term {\it jammed fan} (or {\it tight fan}) was first introduced by Andrey Gavrilyuk in private discussions during the Thematic
Program on Discrete Geometry at Fields Institute (2011). He argued that the notion of jammed fans is interesting on itself and certainly
deserves further investigation.
\end{remark}

Further, we explore how we can associate a lattice to a face $F$. The first option is to take the minimal sublattice of $\Lambda_P$ containing $\mathcal D(F)$.
This produces a 3-dimensional lattice we denote by $\Lambda(F)$. The second option is to consider the lattice
$$\Lambda_{\aff}(F) = \Lambda_P \cap \aff \mathcal D(F).$$
Obviously, $\Lambda(F) \subset \Lambda_{\aff}(F)$. We prove that these lattices coincide. In other terms, the index 
$$(\Lambda_{\aff}(F) : \Lambda(F))$$
is equal to 1. 

The author does not know, whether the analogous result on lattices holds for $(d-4)$-faces. For $(d-5)$-lattice the analogous statement is false since a 5-dimensional simplex of index 2 can be a Delaunay cell (claimed without any particular example by 
Voronoi~\cite[\S~67]{vor1908}, checked explicitly by Ryshkov~\cite{rys1970}).

\section{Jammed fans}

Let $\mathcal C$ be a $k$-dimensional complete polyhedral fan in $\mathbb R^k$. Consider an arbitrary face $C\in \mathcal C$ and let $\dim C = k-m$.

We can define the {\it fan of $C$} in the same way as fans of faces of parallelohedral tilings. Namely, let $L$ be an $m$-dimensional plane in
$\mathbb R^k$ intersecting $C$ transversally. In some neighborhood of $C\cap L$ the section of $\mathcal C$ by $L$ coincides with some complete
$m$-dimensional polyhedral fan, which is called the {\it fan of $C$}.

A cone $C\in \mathcal C$ is called {\it standard}, if its fan is centrally symmetric.

Let $C$ be a standard cone of a complete fan $\mathcal C$. We say that the cones $C_1, C_2 \in \mathcal C$ are symmetric
with respect to $C$ if the central symmetry at $C\cap L$ interchanges the $L$-sections of $C_1$ and $C_2$.

Obviously, for every standard cone $C\in \mathcal C$ all the faces $C'\in \mathcal C$ such that $C \subsetneq C'$ split into
pairs of centrally symmetric with respect to $C$.

A complete $k$-dimensional polyhedral fan $\mathcal C$ is called {\it jammed}, if for every two distinct $k$-dimensional cones 
$C_1, C_2 \in \mathcal C$ the cone $C = C_1\cap C_2$ is standard and $C_1$ and $C_2$ are centrally symmetric with respect to $C$.

Further, a face $F$ of $T(P)$ is called a {\it standard face} if $F$ is invariant under some central symmetry of $\Lambda_P$. It has been shown
in \cite{dol09} that the intersection of two distinct parallelohedra of $T(P)$ is a standard face or empty. Hence Proposition \ref{prop:1} follows.

\begin{proposition}\label{prop:1}
A fan of every face of a parallelohedral tiling is jammed.
\end{proposition}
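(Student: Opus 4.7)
The plan is to read off the jammed condition directly from the combinatorics of parallelohedra meeting at $F$. Fix a transversal $k$-plane $L$ to $F$ as in the construction of the fan of $F$. Top-dimensional cones of this fan are in canonical bijection with the parallelohedra of $T(P)$ that contain $F$, so two distinct top-dimensional cones $C_1,C_2$ correspond to two distinct parallelohedra $P_1,P_2\in T(P)$, both of which contain $F$. Their intersection $F' = P_1\cap P_2$ is a face of $T(P)$ containing $F$, and the cone $C=C_1\cap C_2$ is precisely the cone of the fan of $F$ corresponding to $F'$.

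By the result of \cite{dol09} recalled just above the proposition, $F'$ is a standard face, hence invariant under some central symmetry $\sigma$ of $T(P)$. Since transversal sectioning and passage to the local fan are compatible in an obvious way, the fan of $C$ inside the fan of $F$ coincides, as a combinatorial fan, with the fan of $F'$ in the tiling $T(P)$. The symmetry $\sigma$ induces a central symmetry of the fan of $F'$ about $F'\cap L$, and therefore of the fan of $C$; thus $C$ is a standard cone, which is the first half of the jammed condition. Moreover, $\sigma$ must interchange $P_1$ and $P_2$: the two parallelohedra that meet along the standard face $F'$ lie on opposite sides of $\aff F'$, and $\sigma$ reverses these sides. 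Consequently $\sigma$ interchanges $C_1$ and $C_2$, which is exactly the condition that $C_1$ and $C_2$ are centrally symmetric with respect to $C$.

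The only mildly delicate point is the identification used above: one has to verify that two successive transversal sections, first at $F$ and then at $F'$ inside the resulting fan, yield the same local fan as a single transversal section at $F'$. This is a routine check about transversal planes, and once it is in place the proposition reduces immediately to the theorem of \cite{dol09}.
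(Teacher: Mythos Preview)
Your approach is exactly the paper's: the proposition is presented there as an immediate consequence of Dolbilin's theorem that the intersection of two distinct parallelohedra is a standard face. You have spelled out the one-line deduction, and the outline is correct.

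One step does not work as written. To conclude that $\sigma$ swaps $P_1$ and $P_2$ you say that ``the two parallelohedra that meet along the standard face $F'$ lie on opposite sides of $\aff F'$, and $\sigma$ reverses these sides.'' This is only meaningful when $F'$ is a facet. If $\dim F' \le d-2$ then $\aff F'$ has codimension at least $2$ and does not separate $\mathbb R^d$ into two sides; moreover many parallelohedra, not just $P_1$ and $P_2$, may contain $F'$. The clean fix is to pin down $\sigma$ explicitly: the central symmetry at $\tfrac12\bigl(c(P_1)+c(P_2)\bigr)$ is a symmetry of $\Lambda_P$, it interchanges the centrally symmetric translates $P_1$ and $P_2$ of $P$, and therefore fixes $F'=P_1\cap P_2$. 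Since a bounded convex set has at most one centre of symmetry, this is the $\sigma$ witnessing that $F'$ is standard, and it manifestly swaps $P_1$ and $P_2$. With that correction your argument is complete.
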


To proceed with the proof of Theorem \ref{delaunay}, we will classify all jammed three-dimensional fans. Before we start with
the classification, we emphasize some properties of a jammed three-dimensional fan.

\begin{proposition}[Properties of three-dimensional jammed fan]\label{p2}

Let $\mathcal C$ be a three-dimensional jammed cone fan. Then

\begin{enumerate}
	\item [\rm 1.] Every ray $R\in \mathcal C$ has valence 3 or 4.
	\item [\rm 2.] For every two distinct three-dimensional cones $C_1, C_2 \in \mathcal C$ exactly one of the following statements is 
	correct:
	
\begin{itemize}
	\item [\rm (i)]   $C_1\cap C_2$ is a two-dimensional cone;
	\item [\rm (ii)]  $C_1\cap C_2$ is a ray of valence 4;
	\item [\rm (iii)] $\mathcal C$ is centrally symmetric, the central symmetry of $\mathcal C$ exchanges $C_1$ and $C_2$, and
	$C_1\cap C_2$ is the vertex of $\mathcal C$.
\end{itemize}
\end{enumerate}

\begin{proof}

It is obvious that a fan of every cone of a jammed cone fan is jammed. Consider a ray $R\in \mathcal C$. The fan of $R$ is
two-dimensional. But it is not hard to see that a jammed two-dimensional cone fan can consist only of 3 or 4 two-dimensional
cones. Therefore statement 1 follows.

To check statement 2 consider the intersection $C = C_1\cap C_2$. If $\dim C = 2$, then (i) is true. If $\dim C = 0$, then
(iii) is true by definition of a jammed fan. Finally, if $\dim C = 1$, then $C$ is a standard ray of $\mathcal C$. Therefore
$C$ has an even valence, so the only possible valence of $C$ is 4. Thus (ii) holds.

\end{proof}

\begin{remark}
Notice that the property 1 is essentially the Minkowski-Venkov condition on belts.
\end{remark}

\end{proposition}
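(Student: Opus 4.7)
The plan is to reduce both statements to a local-to-global principle: the fan of any face of a jammed fan is again jammed in its own dimension. I expect this to follow directly from unwinding the definitions, since taking the fan of a face is itself a transversal-section construction, and the notions of standardness and of central symmetry with respect to a face are defined through further transversal sections that iterate correctly. Granting this principle, Statement 1 reduces to classifying two-dimensional jammed fans, and Statement 2 becomes a case split on $\dim(C_1 \cap C_2)$.

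For Statement 1, I would first classify complete two-dimensional jammed fans and show that each has exactly 3 or 4 maximal cones; applied to the fan of a ray $R \in \mathcal C$, this yields the valence assertion. In any complete 2D fan, each ray has a fan consisting of two opposite half-lines, so it is automatically standard, and the two adjacent 2D cones are automatically centrally symmetric with respect to their shared ray. The only substantive constraint comes from pairs of non-adjacent 2D cones $C_i, C_j$: their intersection is the apex, so the jammed hypothesis forces the whole fan to be centrally symmetric and requires $C_j = -C_i$. Since each cone has a unique central image, a cone cannot have two or more non-adjacent partners; labelling the 2D cones cyclically $C_1, \dots, C_n$, this forces $n \leq 4$, and both $n = 3$ (no non-adjacent pairs at all) and $n = 4$ (centrally symmetric) are realised.

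For Statement 2, let $C = C_1 \cap C_2$. Since $C_1 \neq C_2$ and both are three-dimensional, $\dim C \leq 2$, and I would case-split on this dimension. If $\dim C = 2$, then (i) holds directly. If $\dim C = 0$, then $C$ is the apex of $\mathcal C$; the jammed hypothesis makes the apex standard, so $\mathcal C$ itself is centrally symmetric and the symmetry exchanges $C_1$ and $C_2$, giving (iii). If $\dim C = 1$, then $C$ is a standard ray, so its fan is a centrally symmetric 2D fan and therefore has an even number of maximal cones; combined with Statement 1, this forces the valence of $C$ to be $4$, which is (ii). The three cases are mutually exclusive since they are distinguished by $\dim C$.

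The main obstacle I anticipate is the preliminary local-to-global principle itself. The statement is intuitive, but a clean proof has to verify the compatibility of three transversal-section constructions (sub-fan, standardness, central symmetry with respect to a face) and show that they compose correctly in any order. Once this bookkeeping is in place, the classification in Statement 1 and the case analysis in Statement 2 are both quite short.
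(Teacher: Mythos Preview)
Your proposal is correct and follows essentially the same approach as the paper: the paper also first asserts that the fan of any cone of a jammed fan is jammed (calling this ``obvious'' where you flag it as the main bookkeeping step), then proves Statement~1 by classifying two-dimensional jammed fans, and proves Statement~2 by the identical case split on $\dim(C_1\cap C_2)$. Your treatment of the two-dimensional classification is in fact more detailed than the paper's, which simply states that ``it is not hard to see'' that a jammed two-dimensional fan has 3 or 4 maximal cones.
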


\section{Asymmetric jammed fans}\label{sect:3}

Assume that $\mathcal C$ is an asymmetric three-dimensional jammed fan. Thus statement 2.(iii) of Proposition \ref{p2} does not
hold for any pair of distinct three-dimensional cones $C_1, C_2 \in \mathcal C$.

Let $a_3$ be the number of rays in $\mathcal C$ of valence 3, $a_4$ --- the number of rays of valence 4, $b$ --- the number of
two-dimensional cones and $c$ --- the number of three-dimensional cones.

Since every two-dimensional cone is incident to exactly 2 rays,
\begin{equation}\label{eq1}
b = \tfrac{3a_3}{2} + 2a_4.
\end{equation}

Consider a unit sphere $S^2$ centered at the vertex of $\mathcal C$. For every cone $C\in \mathcal C$ of dimension at least one
the intersection $C\cap S^2$ is a disk of dimension $\dim C - 1$. All these disks together can be treated as a representation of $S^2$ by a CW-complex
with Euler characteristic (see, for example, \cite[\S 13.5, Exercise 8]{ffu1989}) equal to~2. Thus
\begin{equation}\label{eq2}
a_3 + a_4 - b + c = 2.
\end{equation}

Finally, since every 2 three-dimensional cones are incident in one of the ways described in Proposition \ref{p2},
\begin{equation}\label{eq3}
\tfrac{c(c-1)}{2} = 2a_4 + b.
\end{equation}

After excluing $a_4$ and $b$ from the system consisting of equations (\ref{eq1}), (\ref{eq2}) and (\ref{eq3}), we get
$$\tfrac{c(c-1)}{2} = 4(c-2) - \tfrac{a_3}{2}.$$

Thus from the inequalities
$$\tfrac{c(c-1)}{2} \leq 4(c-2) \quad \text{and} \quad c\geq 4$$
follows that $c = 4, 5$ or $6$. Using (\ref{eq1})~-- (\ref{eq3}), we find that
$$(a_3, a_4, b, c) = (4,0,6,4),\; (4,1,8,5) \; \text{or} \; (2,3,9,6).$$

Enumerating all combinatorial types of complete three-dimensional polyhedral fans with no more than 6 rays, we find all the possible combinatorial
types of jammed fans. They are fans a), c) and e) in Figure 1. 

\section{Symmetric jammed fans}

For symmetric fans we use the same notation $a_3, a_4, b, c$ as in Section~\ref{sect:3}. Then the identities (\ref{eq1}) and (\ref{eq2})
hold because of the same argumentation as above. The identity (\ref{eq3}) is replaced by 
$$\tfrac{c(c-1)}{2} = 2a_4 + b + \tfrac{c}{2}, \eqno (3') $$
where the new summand $\tfrac{c}{2}$ appears, since in this case there are $\tfrac{c}{2}$ pairs of 3-dimensional cones of $\mathcal C$ with 0-dimensional
intersection.

As above, we exclude $a_4$ and $b$ from the system consisting of equations (\ref{eq1}), (\ref{eq2}) and ($3'$) and obtain
$$\tfrac{c(c-1)}{2} = 4(c-2) - \tfrac{a_3}{2}.$$

For symmetric complete polyhedral fans $c$ is even and at least 6. Thus the inequality
$$\tfrac{c(c-1)}{2} \leq 4(c-2)$$
implies $c=6$ or $c=8$. Consequently,
$$(a_3, a_4, b, c) = (0, 6, 12, 8) \; \text{or} \; (8, 0, 12, 6).$$
By enumerating all combinatorial types of complete centrally symmetric three-dimensional polyhedral fans with 6 or 8 rays, we conclude that in this case
only 2 combinatorial types of jammed fans are possible. They are shown in Figure 1, b) and d).

\section{Associated cells of $(d-3)$-dimensional faces}

Let $F$ be a face of $T(P)$. Suppose that the following two conditions hold.
\begin{enumerate}
	\item For every associated cell $\mathcal D(F')$ being a subcell of $\mathcal D(F)$, $\conv \mathcal D(F')$ is a face of $\conv \mathcal D(F)$.
	\item Conversely, for every face of $\conv \mathcal D(F)$ the vertex set of that face is a subcell of $\mathcal D(F)$.
\end{enumerate}
Then we say that $\mathcal D(F)$ {\it satisfies the duality conditions}.

In this section we prove the following result.

\begin{theorem}\label{th:5.1}

The associated cell of every $(d-3)$-dimensional face $F$ of a parallelohedral tiling satisfies the duality conditions.

\end{theorem}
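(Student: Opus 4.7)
The plan is to derive Theorem \ref{th:5.1} from the Delaunay classification (Theorem \ref{delaunay}) by analyzing each of the five combinatorial types of $\mathcal D(F)$ separately. As preparation, I would observe that for any two distinct centers $c_1,c_2\in\mathcal D(F)$ the translates $F-c_1$ and $F-c_2$ are both $(d-3)$-dimensional faces of $P$; since a face of a convex polytope is determined by its affine hull, these two affine hulls must differ, so $c_1-c_2\notin\mathrm{lin}(\aff F)$. Hence the quotient map $\pi\colon\mathbb R^d\to\mathbb R^d/\mathrm{lin}(\aff F)$ is injective on $\mathcal D(F)$, giving $\dim\aff\mathcal D(F)\le 3$.

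I would first establish the analogue of Theorem \ref{th:5.1} in dimension two, i.e., for associated cells of $(d-2)$-faces $F'\supseteq F$. By the same two-dimensional jamming analysis used in Proposition~\ref{p2}, the fan of $F'$ has 3 or 4 cones; in the 3-cone case the three centers cannot be collinear (by the injectivity argument above combined with the fact that distinct adjacent hyperfaces have non-parallel outward normals in the quotient plane), and in the 4-cone case central symmetry of the fan forces a parallelogram. Thus $\mathcal D(F')$ is affinely realized as a triangle or a parallelogram, and the duality conditions hold at codimension two. This serves as the base case.

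Next, for each of the five combinatorial types of Theorem \ref{delaunay}, I would verify the duality conditions for $\mathcal D(F)$ by producing, for each ray $R$ of the fan of $F$ (which corresponds to a $(d-2)$-face $F'\supseteq F$), a supporting hyperplane of $\conv\mathcal D(F)$ that cuts out $\conv\mathcal D(F')$. The natural candidate is a linear functional on $\mathbb R^d/\mathrm{lin}(\aff F)$ attaining its maximum on $R$ among the rays of the fan, hence attaining its maximum on $\mathcal D(F)$ precisely at those centers whose dual 3-cones contain $R$, namely at $\mathcal D(F')$. By the base case this intersection is a non-degenerate triangle or parallelogram. Enumerating the rays in each of the five fans of Figure~1 and checking that the 2-subcells so produced glue together along the edges dictated by the fan verifies condition~1; condition~2 then follows because the glued 2-subcells already form a topological 2-sphere combinatorially equivalent to the boundary of the target polytope, leaving no room for extra faces of $\conv\mathcal D(F)$. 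In the symmetric cases (octahedron and parallelepiped) the standardness of $F$ provides a central symmetry of $\Lambda_P$ preserving $\mathcal D(F)$, which reduces the bookkeeping considerably; the asymmetric cases require a more direct check of adjacencies.

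The main obstacle is the construction of the supporting functional, together with the accompanying statement that $\dim\aff\mathcal D(F)=3$ rather than possibly smaller. Both amount to translating the purely combinatorial fan picture into the affine-geometric assertion that the 3-cones of the fan of $F$ point ``in the right directions'' relative to each ray $R$. This cannot come from fan combinatorics alone; it must be extracted from the jamming property (Proposition~\ref{p2}) and the geometric meaning of the fan as the local model of the tiling near $F$. Once this local-to-affine translation is in place, the five cases reduce to routine verifications.
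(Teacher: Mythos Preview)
Your proposal has a genuine gap at the step you yourself flag as the ``main obstacle.'' The implication from ``$\ell$ attains its maximum among the rays at $R$'' to ``$\ell$ attains its maximum on $\mathcal D(F)$ at $\mathcal D(F')$'' is unjustified, and jamming alone cannot supply it: jamming constrains the combinatorics and local symmetry of the fan, whereas what you need is affine information about where the projected centers sit. One can indeed show that each $\pi(c_i)-\pi(x)$ lies in the interior of the $i$-th $3$-cone (because $c_i\in\mathrm{int}\,P_i$ and the cone is the tangent cone of $P_i$ at $x$), but this is not enough: already for a planar fan with three sectors one can place a point in each open sector so that one of them lies on the segment joining the other two, so ``each point in its own cone'' does not force convex position, let alone the predicted face lattice. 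Your base case has the same weakness: injectivity of $\pi$ together with ``non-parallel outward normals'' does not exclude collinear centers, since facet vectors (the differences $c_i-c_j$) are in general unrelated to facet normals.

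The paper closes the gap with a different tool that your outline never invokes. Whenever $G$ is a standard face with $P_{m_1}\cap P_{m_2}=P_{m_3}\cap P_{m_4}=G$, one has $c(P_{m_1})+c(P_{m_2})=c(P_{m_3})+c(P_{m_4})=2c(G)$. For each of the five fan types these midpoint relations form a linear system that expresses every $x_m-x_0$ as a fixed linear combination of three of them; hence $\conv\mathcal D(F)$ is the image of one specific $3$-polytope $D$ under a linear map, and the duality conditions reduce to inspecting $D$. Non-degeneracy ($\dim\aff\mathcal D(F)\ge 3$) is handled separately, via the classical fact that projecting along a $(d-2)$-face produces a planar hexagonal or parallelogram tiling. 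Without the midpoint relations, or a genuine substitute for them, I do not see how your supporting-hyperplane construction can be completed.
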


\begin{proof}

First we show that 
\begin{equation}\label{eq:5.1}
\dim\aff \mathcal D(F) > 2.
\end{equation}

Assume that $\dim\aff \mathcal D(F) \leq 2$. Then, obviously,
$$\aff \mathcal D(F) = \aff \mathcal D(F'),$$
where $F'$ is an arbitrary $(d-2)$-dimensional superface of $F$.

Let $\pi_{F'}$ be a projection along $F'$ to a complementary 2-dimensional plane. It is known \cite{min1897, del29} that the set
$$\{\pi_{F'}(P'): P'\in T(P) \; \text{and} \; c(P') \in \aff \mathcal D(F')\}$$
is a tiling of a plane into hexagons if $F'$ is primitive or into parallelograms if $F'$ is standard.

In each case a) -- c), e) in Figure 1 the fan of $F$ contains a primitive $(d-2)$-face $F'$.
Then from the projection argument above follows that the set 
$$\{P'\in T(P) : c(P') \in \aff \mathcal D(F')\}$$
has no subset of 4 parallelohedra with a common point. But at least 4 parallelohedra share $F$, a contradiction.

In the case d) in Figure 1 let $F'$ be an arbitrary $(d-2)$-dimensional superface of $F$.
From the projection argument above we conclude that the set 
$$\{P'\in T(P) : c(P') \in \aff \mathcal D(F')\}$$
has no subset of 5 parallelohedra with a common point. But 8 parallelohedra share $F$, a contradiction.

Now let $G$ be a standard face of $T(P)$ and $P_{m_i} \in T(P)$ for $i = 1,2,3,4$. Suppose that
$$P_{m_1}\cap P_{m_2} = P_{m_3} \cap P_{m_4} = G.$$
Then, if $c(Q)$ denotes the center of symmetry of a polytope $Q$, 
\begin{equation}\label{eq:5.2}
c(P_{m_1})+c(P_{m_2}) = c(P_{m_3})+c(P_{m_4}),
\end{equation}
since both sides equal $2c(G)$.

Let $\mathcal D(F) = \{x_0, x_1, \ldots, x_n\}$, where $x_m = c(P_m)$. Then every equation of type (\ref{eq:5.2}) implies
\begin{equation}\label{eq:5.3}
(x_{m_1} - x_0) + (x_{m_2} - x_0) = (x_{m_3} - x_0) + (x_{m_4} - x_0).
\end{equation}

For each separate case a) -- e) in Figure 1 write down all possible equations of type (\ref{eq:5.3}). Solving the resulting system gives 3 vectors
$\mathbf e_1$, $\mathbf e_2$, $\mathbf e_3$ such that every vector $(x_m - x_0)$ ($m = 0, 1, \ldots, n$) is a linear combination of  
$\mathbf e_1$, $\mathbf e_2$ and $\mathbf e_3$ with particular real coefficients. Thus $\conv \mathcal D(F)$ is an image of a particular
3-dimensional polytope $D$ under some linear map. 

The inequality (\ref{eq:5.1}) shows that the map is not degenerate, so $\conv \mathcal D(F)$ is affinely (and then combinatorially)
equivalent to $D$. In each case a) -- e) the reader can use the system of equations of type (\ref{eq:5.3}) to specify $D$ and then check the duality conditions.

\end{proof}

\section{Lattices of $(d-3)$-dimensional faces}

\begin{theorem} 

For every $(d-3)$-dimensional face $F$ of a parallelohedral tiling holds
$$\Lambda_{\aff}(F) = \Lambda(F)).$$

\end{theorem}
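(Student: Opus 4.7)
The inclusion $\Lambda(F)\subseteq\Lambda_{\aff}(F)$ is automatic; I focus on the reverse. My starting point is a convexity observation: the only points of $\Lambda_P$ lying in $\conv\mathcal{D}(F)$ are the vertices of $\mathcal{D}(F)$. To see this, translate so $c(P)=0$ and write any $y\in\Lambda_P\cap\conv\mathcal{D}(F)$ as $y=\sum_i\lambda_i x_i$ with $x_i\in\mathcal{D}(F)$, $\lambda_i\geq 0$, $\sum_i\lambda_i=1$. For any $f\in F$,
$$f-y=\sum_i\lambda_i(f-x_i)$$
is a convex combination of points of $P$, since $F\subset P+x_i$ gives $f-x_i\in P$ for each $i$. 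Convexity of $P$ yields $f-y\in P$, so $F\subset P+y$, meaning $y$ is the center of a parallelohedron containing $F$, i.e. $y\in\mathcal{D}(F)$.

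This immediately finishes case (d): when $\mathcal{D}(F)$ is a parallelepiped, $\conv\mathcal{D}(F)$ is itself a fundamental parallelepiped of the sublattice $L=\Lambda(F)$ generated by the edges of $\mathcal{D}(F)$ from any fixed vertex, and the convexity observation above forces $L=\Lambda_{\aff}(F)$. For the remaining cases (a), (b), (c), (e), the fundamental parallelepiped of $L$ strictly contains $\conv\mathcal{D}(F)$, so I plan to reduce to a three-dimensional problem via a projection analogous to the one used in the proof of Theorem~\ref{th:5.1}. Specifically, the parallelohedra of $T(P)$ whose centers lie in $\aff\mathcal{D}(F)$ should project along the direction space of $F$ to a three-dimensional parallelohedral tiling $T'$ of a complementary $3$-plane $V$, with translation lattice canonically identified with $\Lambda_{\aff}(F)$ via the affine isomorphism $\aff\mathcal{D}(F)\to V$. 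Under this identification $\mathcal{D}(F)$ becomes the associated cell $\mathcal{D}(v)$ of the corresponding vertex $v$ of $T'$, and the claim $\Lambda(F)=\Lambda_{\aff}(F)$ reduces to the purely three-dimensional statement that the centers of parallelohedra sharing a given vertex of a $3$-dimensional parallelohedral tiling generate the translation lattice. This I would verify by Fedorov's classification: for each of the five $3$-dimensional parallelohedra and each vertex type therein, writing down the centers of parallelohedra at the vertex and checking by elementary linear algebra that they generate the full translation lattice.

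The main obstacle is the extension of the projection argument from $(d-2)$-faces (established in the proof of Theorem~\ref{th:5.1}) to the $(d-3)$-face $F$. Central symmetry and centrally symmetric hyperfaces of the projected body follow automatically from linearity of the projection; the belt condition for the projected parallelohedron matches the combinatorics of $(d-2)$-superfaces of $F$, which Proposition~\ref{p2} constrains; and the tiling property requires a coverage argument that combines the local fan structure at $F$ with the lattice periodicity of the tiling. Once this projection step is in place, the three-dimensional verification reduces to an elementary case-by-case check.
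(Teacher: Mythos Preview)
Your convexity observation is correct and elegant, and it does settle the parallelepiped case cleanly. The rest of the argument, however, hinges on projecting the tiling along $\lin F$ to obtain a genuine three-dimensional parallelohedral tiling whose translation lattice is (the image of) $\Lambda_{\aff}(F)$, and this step is not established --- nor is it clear that it is even true. Three concrete difficulties: (i) you implicitly assume that $\aff\mathcal D(F)$ is transversal to $\lin F$, which is nowhere proved; (ii) the assertion that ``centrally symmetric hyperfaces of the projected body follow automatically from linearity of the projection'' is not justified --- the facets of $\pi_{\lin F}(P)$ are projections of shadow-boundary faces of $P$, which need not be facets of $P$ and hence need not be centrally symmetric; (iii) even if the projected tiles fit together near $\pi(F)$, a facet of $P$ that happens to be parallel to $\lin F$ but whose facet vector does not lie in $\lin\mathcal D(F)$ would destroy the global coverage/non-overlap property, and nothing in your sketch rules this out. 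The $(d-2)$-projection invoked in Theorem~\ref{th:5.1} works precisely because of the belt structure, and no analogous structural fact is available in codimension three.

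The paper's proof avoids projection entirely. It notes that for every $P'\in T(P)$ the set $-\conv\mathcal D(F)+x+c(P')$ lies inside $P'$; hence the $\Lambda_{\aff}(F)$-translates of $-\conv\mathcal D(F)+x$ form a packing. Passing to the Minkowski symmetrization (via \cite{dgr1962}) and comparing volumes yields
\[
(\Lambda_{\aff}(F):\Lambda(F))\ \le\ \frac{8\,V(\Lambda(F))}{V\bigl(-\conv\mathcal D(F)+\conv\mathcal D(F)\bigr)},
\]
which is $1$ in cases b)--e) and at most $2$ in the tetrahedral case a); the seven candidate index-$2$ superlattices are then eliminated by a short direct inspection. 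Note that your convexity lemma and the paper's containment $-\conv\mathcal D(F)+x+c(P')\subset P'$ are essentially the same observation --- the paper just exploits it through a packing bound rather than through a projection whose validity remains in doubt.
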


\begin{proof}

Choose a point $x\in \relint F$ and fix a parallelohedron $P_0\in T(P)$ containing $F$ as a face. Let $P_1$ be some parallelohedron containing $F$.
The point $x$ belongs to $P_1$, so
$$x+c(P_0)-c(P_1) \in P_0.$$

Therefore if $P_0, P_1, \ldots, P_n$ are all the parallelohedra of $T(P)$ sharing $F$, then
$$\conv \{-c(P_i)+x+c(P_0) : i=0,1,\ldots,n \} \subset P_0.$$
Consequently, for every $P'\in T(P)$
$$(- \conv \mathcal D(F)) + x + c(P') = \conv \{x-c(P_i)+c(P') : i=0,1,\ldots,n \} \subset P'.$$

Restricting to parallelohedra satisfying $P' \in \Lambda_{\aff}(F)$, we conclude that the translates of the set $(- \conv \mathcal D(F)) + x $ 
by vectors of $\Lambda_{\aff}(F)$ do not overlap.

According to a well-known statement (see, for example, the paper \cite{dgr1962} by Danzer and Gr\"unbaum), the translates of the Minkowski symmetrization
$$\tfrac 12 (- \conv \mathcal D(F)) + \tfrac 12 \conv \mathcal D(F)$$
by vectors of $\Lambda_{\aff}(F)$ do not overlap as well.

This implies the inequality
\begin{equation}\label{eq:6.1}
V\left( \tfrac 12 (- \conv \mathcal D(F)) + \tfrac 12 \conv \mathcal D(F) \right) \leq V(\Lambda_{\aff}(F)),
\end{equation}
where the left part is the volume of a 3-dimensional set and the right part is the fundamental volume of the lattice.

Dividing the fundamental volume $V(\Lambda(F))$ by both parts of (\ref{eq:6.1}), we obtain
$$(\Lambda_{\aff}(F) : \Lambda(F)) \leq \frac{8V(\Lambda(F))}{V\left( (- \conv \mathcal D(F)) + \conv \mathcal D(F) \right)}.$$

For the cases b) -- e) in Figure 1 the last inequality immediately gives $(\Lambda_{\aff}(F) : \Lambda(F)) = 1$. For the case a) we have
$(\Lambda(F) : \Lambda_{\aff}(F)) \leq 2$.

Now suppose that $\conv \mathcal D(F)$ is a tetrahedron and $(\Lambda_{\aff}(F) : \Lambda(F)) = 2$. The latter condition means that
$$\Lambda_{\aff}(F) = \Lambda(F) \cup \left(\Lambda(F) + \tfrac 12 \mathbf t\right),$$
where $\mathbf t$ is a vector of $\Lambda(F)$. If $(\mathbf t_1 - \mathbf t_2)/2$ is a vector of $\Lambda(F)$, then the lattices
$$\Lambda(F) + \tfrac 12 \mathbf t_1 \quad \text{and} \quad \Lambda(F) + \tfrac 12 \mathbf t_2$$
coincide. Thus for fixed $\mathcal D(F)$ the lattice $\Lambda(F)$ is fixed and there are 7 candidates for $\Lambda_{\aff}(F)$.

Direct inspection shows that for each of the 7 possibilities each tetrahedron 
$$(- \conv \mathcal D(F)) + x + c(P')$$
has at least one edge, the midpoint of which belongs to an another tetrahedron of the same form. On the other hand, one can easily check that
all midpoint of eges of this tetrahedron lie in the interior of $P'$, a contradiction.

Thus in the case a) in Figure 1 $(\Lambda_{\aff}(F) : \Lambda(F)) = 1$ holds as well.

\end{proof}

\end{document}